\newcommand{\Sp}{\mathrm{Sp}}
\newcommand{\GL}{\mathrm{GL}}
\newcommand{\SL}{\mathrm{SL}}
\newcommand{\fS}{{\mathfrak S}}
\newcommand{\bS}{{\mathbb S}}
\newcommand{\sV}{\mathsf{Vect}}
\newcommand{\sR}{\mathsf{Rep}}
\DeclareMathOperator{\bch}{\mathbf{ch}}
\DeclareMathOperator{\Tr}{Tr}
\DeclareMathOperator{\Hom}{Hom}
\DeclareMathOperator{\End}{End}
\newtheorem{thm}{Theorem}%[section]
\newtheorem{prop}[thm]{Proposition}
\theoremstyle{definition}
\newtheorem{defn}{Definition}
\theoremstyle{remark}
\newtheorem*{ex}{Example}
\title{On enumeration in classical invariant theory}
\author{Bruce W. Westbury}%
\address{Department of Mathematics, University of Warwick, Coventry,
  CV4 7AL}%
\email{Bruce.Westbury@warwick.ac.uk}%
\date{21 February 2014}
\begin{document}

\begin{abstract}
In this paper we study the Frobenius characters of the invariant subspaces of the tensor powers of a representation V. The main result is a formula for these characters for a polynomial functor of V involving the characters for V. This formula uses methods of enumerative combinatorics and in particular is similar to the cycle index series of regular graphs.
\end{abstract}

\maketitle

\section{Introduction}
This paper is a contribution to tensor invariant theory. The fundamental problem is
as follows. Let $G$ be a reductive group and $V$ a finite dimensional representation.
Then for $r\ge 0$, $\otimes^rV$ has an action of the symmetric group $\fS_r$ by permuting
indices and this commutes with the diagonal action of $G$. Hence the $G$-invariant subspace
has an action of $\fS_r$. The problem is then to determine the Frobenius character of this
representation. This is known to be a hard problem. The more general problem of determining
the Frobenius characters of all isotypic subspaces is equivalent to determining the
decompositions of the Schur functors evaluated on $V$ or to giving the branching rules
for the homomorphism $G\rightarrow \GL(V)$, see \cite[Theorem III]{MR0020542}.

Fix $G$ and denote the Frobenius character of the $G$-invariant subspace of $\otimes^rV$
by $I_r(V)$. Let $P$ be a homogeneous polynomial functor of degree $k$; for example, the
$k$-th symmetric power or the $k$-th alternating power and denote the character by $\bch P$.
Then the main theorem is the following formula giving $I_r(P(V))$ in terms of $I_{rk}(V)$.
Our notation for symmetric functions follows \cite[Chapter I]{MR1354144} and \cite[Chapter 7]{MR1676282}.
These text books give the necessary background on symmetric functions.

\begin{thm}\label{thm:inv} For $r\ge 0$,
\begin{equation*}
 I_r(P(V))[X] = \big\langle h_r [ X.\bch P[Y] ], I_{rk}(V)[Y] \big\rangle_Y
\end{equation*}
\end{thm}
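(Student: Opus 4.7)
The plan is to verify the formula by expanding both sides in the power-sum basis and comparing term by term.

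Via Schur--Weyl duality, write $P(V) = V^{\otimes k} \otimes_{\fS_k} M_P$, where $M_P$ is the $\fS_k$-module whose Frobenius characteristic is $\bch P$. Averaging over $\fS_k$ gives the trace identity $\Tr_{P(V)}(g) = \bch P[y(g)]$ for $g \in G$, where $y(g) = (y_1, y_2, \ldots)$ are the eigenvalues of $g$ acting on $V$. For $\sigma \in \fS_r$ of cycle type $\mu$, the standard trace formula for permutations acting on a tensor power yields
\[
\Tr_{\otimes^r P(V)}(g\sigma) = \prod_i \Tr_{P(V)}(g^{\mu_i}) = \prod_i \bch P\bigl[y(g)^{\mu_i}\bigr] = p_\mu\bigl[\bch P[y(g)]\bigr],
\]
where the last equality uses the plethystic identity $p_n[f] = f[x_1^n, x_2^n, \ldots]$. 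Applying the Reynolds operator on $G$ and taking Frobenius characteristic in the $X$-variables gives
\[
I_r(P(V))[X] = \sum_{\mu \vdash r} \frac{p_\mu[X]}{z_\mu}\cdot \tfrac{1}{|G|}\int_G p_\mu\bigl[\bch P[y(g)]\bigr]\,dg.
\]

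For the right-hand side, I would use $h_r = \sum_{\mu \vdash r} p_\mu/z_\mu$ and the multiplicativity of plethysm by $p_n$ (namely $p_n[f h] = p_n[f]\,p_n[h]$) to obtain
\[
h_r\bigl[X \cdot \bch P[Y]\bigr] = \sum_{\mu \vdash r} \frac{p_\mu[X]\,p_\mu[\bch P[Y]]}{z_\mu}.
\]
Given this, it suffices to establish, for each $\mu \vdash r$, the identity
\[
\bigl\langle p_\mu[\bch P[Y]],\, I_{rk}(V)[Y]\bigr\rangle_Y \;=\; \tfrac{1}{|G|}\int_G p_\mu\bigl[\bch P[y(g)]\bigr]\,dg.
\]

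This last step is a reproducing-kernel argument. The character formula for $(V^{\otimes rk})^G$ reads $I_{rk}(V)[Y] = \tfrac{1}{|G|}\int_G \sum_{\nu \vdash rk} p_\nu[Y]\,p_\nu[y(g)]/z_\nu\,dg$, and the degree-$rk$ Cauchy kernel $\sum_\nu p_\nu[Y]\,p_\nu[Z]/z_\nu$ has the property that pairing it with any degree-$rk$ symmetric function $f[Y]$ evaluates $f$ at $Z$. Applied with $f[Y] = p_\mu[\bch P[Y]]$ (which has total degree $rk$ in $Y$) and $Z = y(g)$, this yields the desired identity. The main obstacle I foresee is the plethystic bookkeeping of the separated alphabets $X$ and $Y$ in $h_r[X \cdot \bch P[Y]]$ together with the interchange of the $G$-average and the Hall inner product; once the reproducing-kernel property is cleanly set up, the two expansions agree coefficient by coefficient in $p_\mu[X]/z_\mu$.
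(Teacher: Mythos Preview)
Your argument is correct. The paper takes a different, more structural route: it proves a general statement for $\bch\Hom_G(T^\bullet(P(V)),W)$ with arbitrary $W$ and then specialises to $W$ trivial. That proof expands in the Schur basis rather than power sums: the Cauchy identity $H[X.Y]=\sum_\lambda s_\lambda[X]\,s_\lambda[Y]$ reduces the claim to $\dim\Hom_G(\bS^\lambda(P(V)),W)=\langle s_\lambda[\bch P],\,\bch\Hom_G(T^\bullet(V),W)\rangle$, and this follows from the $\otimes$--$\Hom$ adjunction $\Hom_G(A\otimes_{\fS_r}V^{\otimes r},W)\cong\Hom_{\fS_r}(A,\Hom_G(V^{\otimes r},W))$ applied to the composite functor $\bS^\lambda\circ P$. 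Your approach instead works directly with traces and the Reynolds operator, matching coefficients of $p_\mu[X]/z_\mu$ via the reproducing-kernel property of the degree-$rk$ Cauchy kernel. The paper's version never integrates over $G$ or invokes eigenvalues, so it sidesteps the informal ``$\tfrac{1}{|G|}\int_G$'' and the tacit reliance on semisimplicity of $g$ (or passage to a compact form); your version is more hands-on and makes the plethystic bookkeeping transparent, at the cost of those analytic details.
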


This formula involves the following constructions on symmetric functions; the product,
 plethysm and the scalar product; also $X$ and $Y$ denote two distinct alphabets.
This formula is only useful if the symmetric functions $I_r(V)$ are known and there are very
few cases where these are known. The main examples are
\begin{itemize}
 \item Let $V$ be the defining representation of $\SL(n)$. Then
\begin{equation*}
 I_r(V) = \begin{cases}
s_{m^n} &\text{if $r=mn$} \\
0 &\text{otherwise}
\end{cases}
\end{equation*}
\item Let $V$ be the defining representation of $\Sp(2n)$. Then
\begin{equation*}
 I_r(V) = \sum_{\substack{\lambda\vdash 2r\\
       \text{columns of even length}\\ \ell(\lambda)\le 2n}}
   s_{\lambda}
\end{equation*}
\item Let $V$ be the defining permutation representation of $\fS(n)$. Then
\begin{equation*}
 \sum_{r\ge 0} I_r(V) = H_n[H_+]
\end{equation*}
where $H_n=1+h_1+\dotsb + h_n$ and $H_+= h_1+h_2+\dotsb $.
 \item Let $V$ be the adjoint representation of $\GL(n)$. Then
\begin{equation*}
 I_r(V) = \sum_{\lambda\vdash r} s_\lambda \ast s_\lambda
\end{equation*}
where $\ast$ is the internal product of symmetric functions.
\end{itemize}

The central problem in invariant theory is usually understood to be to give generators and
defining relations for the ring of invariant polynomials on $V$. An important preliminary
problem is to determine the Hilbert series of this graded ring. 

\begin{prop} For $r\ge 0$, the dimension of the space of homogeneous invariant polynomials
of degree $r$ on $P(V)$ is
\begin{equation*}
 \big\langle h_r[\bch P], I_{rk}(V)\big\rangle
\end{equation*}
\end{prop}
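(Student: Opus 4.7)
The plan is to derive this dimension formula as a direct corollary of Theorem~\ref{thm:inv} by pairing both sides against $h_r$.

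First I would recall the standard translation from Frobenius characters to invariant dimensions: the space of homogeneous invariant polynomials of degree $r$ on a $G$-representation $W$ is (in characteristic zero) $\Sym^r(W^*)^G$, whose dimension equals the multiplicity of the trivial $\fS_r$-representation inside the $G$-invariants of $\otimes^r W$. In terms of Frobenius characters this multiplicity is $\langle I_r(W), h_r\rangle$, since $h_r = s_{(r)}$ is the Frobenius character of the trivial $\fS_r$-module. Applying this with $W = P(V)$, the target dimension is $\langle I_r(P(V))[X], h_r[X]\rangle_X$.

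Next I would substitute the formula from Theorem~\ref{thm:inv} and exchange the order of the two scalar products, which is justified because the inner product is bilinear and acts on two distinct alphabets $X$ and $Y$:
\begin{equation*}
\langle I_r(P(V))[X], h_r[X]\rangle_X
= \big\langle\, \langle h_r[X\cdot \bch P[Y]],\, h_r[X]\rangle_X,\; I_{rk}(V)[Y] \,\big\rangle_Y.
\end{equation*}
Thus the task reduces to evaluating the inner pairing in $X$. For this I would invoke the plethystic Cauchy identity $h_r[XY] = \sum_{\lambda \vdash r} s_\lambda[X]\, s_\lambda[Y]$, applied with the second alphabet specialized to $\bch P[Y]$. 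Since $\langle s_\lambda[X], h_r[X]\rangle_X = \delta_{\lambda,(r)}$, only the term $\lambda = (r)$ survives, giving
\begin{equation*}
\langle h_r[X\cdot \bch P[Y]], h_r[X]\rangle_X = s_{(r)}[\bch P[Y]] = h_r[\bch P[Y]].
\end{equation*}
Plugging this back produces $\langle h_r[\bch P], I_{rk}(V)\rangle$, as required.

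The only subtle step is the justification that the plethystic substitution $X \mapsto X\cdot \bch P[Y]$ commutes cleanly with the $X$-pairing against $h_r$; this is handled by expanding in the Schur basis before pairing, so the obstacle is more notational than conceptual. Everything else is formal manipulation of symmetric functions, and the result follows immediately.
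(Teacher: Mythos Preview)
Your argument is correct and matches the paper's intended derivation. The paper gives no separate proof but presents the Proposition as an immediate corollary of Theorem~\ref{thm:inv}, and the Schur expansion $\left\langle H[X.F[Y]],G[Y]\right\rangle_Y=\sum_\lambda \langle s_\lambda[F],G\rangle\, s_\lambda$ displayed a few lines later is precisely your Cauchy--identity computation of the inner $X$-pairing, specialized to $\lambda=(r)$.
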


\begin{ex} The dimension of the space of invariants of degree $r$ of a $k$-ary form
in $n$ variables is $0$ unless $n|rk$ and in this case is
 \begin{equation*}
   \big\langle h_r[h_k], s_{m^n}\big\rangle
 \end{equation*}
where $m=rk/n$.
\end{ex}

The fundamental construction in this paper is the following symmetric function constructed
from the two symmetric functions $F$ and $G$,
\begin{equation}\label{eq:fund}
 \left\langle H[X.F[Y]],G[Y] \right\rangle_Y
\end{equation}
These can be computed in terms of the power sum symmetric functions by
\begin{equation*}
 \left\langle H[X.F[Y]],G[Y] \right\rangle_Y = 
\sum_\lambda \frac1{z_\lambda}\left\langle p_\lambda[F],G \right\rangle p_\lambda
\end{equation*}
and can also be computed in terms of the Schur symmetric functions by
\begin{equation*}
 \left\langle H[X.F[Y]],G[Y] \right\rangle_Y =
\sum_\lambda \left\langle s_\lambda[F],G \right\rangle s_\lambda
\end{equation*}

The construction \eqref{eq:fund} is an important construction in enumerative combinatorics.
Our first example is taken from \cite{MR1575574}. This example is of historical interest
as it may be the first example of the use of plethysm. The second example is from \cite{MR0108446}
and this paper contains many other examples of the use of \eqref{eq:fund} in enumeration.
\begin{ex}
A pack of cards consists of $m$ identical sets of $n$ cards. The $nm$ cards are
dealt into $n$ hands each with $m$ cards. The hands are unordered and the cards
in each hand are also unordered. Let $f(m,n)$ be the number of possible deals.
Equivalently, $f(m,n)$ is the number of $n\times n$ matrices with entries in $\{0,1,\dotsc ,m\}$,
all row sums $n$ and all column sums $m$. Two matrices are equivalent if one can be obtained from
the other by permuting the rows.

MacMahon shows that $f(m,n)$ is the coefficient of the monomial symmetric function
asociated to the partition $m^n$ in the expansion of $h_n[h_m]$. This is equivalent to
\begin{equation*}
 f(m,n)= \big\langle h_n[h_m],h_{m}^n\big\rangle%\qquad
% g(m,n)= \left\langle h_n[e_m],h_{m}^n\right\rangle
\end{equation*}
Consider all matrices with entries in $\{0,1,\dotsc ,m\}$,
all row sums $n$ and all column sums $m$. Then $\fS_n$ acts by permuting the rows
and $f(m,n)$ is the number of orbits. The cycle index series of this action is
\begin{equation*}
 \big\langle h_n[X.h_m[Y]], h_m^n[Y] \big\rangle_Y
\end{equation*}
\end{ex}

\begin{ex} 
A graph may have multiple edges and loops. A graph is $k$-regular if each vertex
has valency $k$ where each loop contributes 2 to the valence and every other edge
contributes 1.

Then \cite[(5.8)]{MR0108446} gives the number of $k$-regular graphs
on $n$ vertices as
\begin{equation*}
 \big\langle h_n[h_k],h_{nk/2}[h_2]\big\rangle 
\end{equation*}

The cycle index series of $k$-regular graphs on a set of labelled vertices is
\begin{equation*}
 \big\langle h_n[X.h_k[Y]],h_{nk/2}[h_2[Y]]\big\rangle_Y 
\end{equation*}
\end{ex}

\section{Invariant theory}
First we review the theory of polynomial functors from \cite[Appendix A]{MR1354144}.
\subsection{Polynomial functors}
Let $\sV$ be the category of finite
dimensional vector spaces and linear maps. A \emph{polynomial functor}
is a functor $P\colon\sV\rightarrow\sV$ such that for any two finite dimensional vector spaces,
$U$ and $V$, the map $P(U,V)\colon\Hom(U,V)\rightarrow\Hom(P(U),P(V))$, $f\mapsto P(f)$, is polynomial.

The polynomial functor is \emph{homogeneous} of degree $r$ if $P(U,V)$ is homogeneous of
degree $r$ for all $U$ and $V$. Every polynomial functor is a sum of homogeneous polynomial
functors.

\begin{defn}\label{schur}
Let $A$ be a finite dimensional representation of $\fS_r$ then the associated polynomial
functor is defined on objects by $V \mapsto A\otimes_{K[\fS_r]} (\otimes^r V)$.
\end{defn}
This construction defines a functor from the category of finite dimensional
representations of $\fS_r$ to the category of homogeneous polynomial functors of
degree $r$. This functor is an equivalence where the inverse equivalence is given by polarisation,
see \cite[Appendix A,(5.4)]{MR1354144}

The basic examples are that the $r$-th symmetric power functor corresponds to the
the trivial representation, the $r$-th exterior power functor corresponds to the
the sign representation and the $r$-th tensor power functor corresponds to the
the regular representation. The polynomial functors associated to the irreducible representations
of the symmetric groups are known as Schur functors. The Schur functor associated to
the partition $\lambda$ is denoted $\bS^\lambda$.

Let $G$ be an affine algebraic group. Let $\sR(G)$ be the category of finite dimensional rational
representations. Then a polynomial functor $P$ also gives a functor $P\colon\sR(G)\rightarrow\sR(G)$.

The \emph{character} of a homogeneous polynomial functor of degree $r$ is a symmetric function
of degree $r$. The character of $P$ is denoted by $\bch (P)$ and is determined by the property
that for any invertible diagonal matrix $A$ with diagonal entries $a_1,\dotsc,a_n$ we have
 \begin{equation*}
\bch (P)(a_1,\dotsc,a_n,0,0,\dotsc) = \Tr P(A)
\end{equation*}
For example, $\bch\bS^{(n)} = h_n$, $\bch \bS^{1^n} = e_n$ and  $\bch \bS^\lambda = s_\lambda$.

Let $\rho\colon \fS_r\rightarrow \End(A)$ be a representation of $\fS_r$ and $P$ the
associated polynomial functor. Then we have
\begin{equation*}
 \bch P = \frac 1{r!}\sum_{\pi\in\fS_r} \Tr \rho(\pi) p_{\lambda(\pi)}
\end{equation*}
where $\lambda(\pi)$ is the cycle type of $\pi$ and $\Tr$ is the matrix trace.

\subsection{Invariant theory}
Denote the tensor algebra of $V$ by $T^\bullet(V)$ and put $H=1+h_1+h_2+\dotsb$.
\begin{prop}\label{prop:adj} Let $P$ be a polynomial functor and $V,W$ representations of $G$. Then
 \begin{equation*}
  \dim \Hom_G (P(V),W) = \left\langle \bch P, \bch \Hom_{\fS}( T^\bullet(V), W ) \right\rangle
 \end{equation*}
\end{prop}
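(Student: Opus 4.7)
The plan is to reduce to the homogeneous case and then combine the Schur equivalence of Definition~\ref{schur} with tensor-Hom adjunction and Frobenius reciprocity on the symmetric group.

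First I would split both sides according to the homogeneous decomposition $P = \bigoplus_r P_r$. Since every polynomial functor decomposes as a finite sum of homogeneous pieces and both $\bch$ and $\Hom_G$ are additive in $P$, and since the Hall pairing annihilates symmetric functions of different degrees, it suffices to prove the identity assuming $P$ is homogeneous of degree $r$. In that case the right-hand side becomes
\begin{equation*}
\bigl\langle \bch P,\ \bch_r \Hom_G(\otimes^r V, W) \bigr\rangle,
\end{equation*}
where $\bch_r$ denotes the Frobenius characteristic of the $\fS_r$-module structure on $\Hom_G(\otimes^r V,W)$ induced from the permutation action of $\fS_r$ on $\otimes^r V$.

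Next I would use Definition~\ref{schur} together with the equivalence of categories recalled just after it: $P$ corresponds to a finite dimensional $\fS_r$-representation $A$ with $P(V) = A\otimes_{K[\fS_r]} (\otimes^r V)$ and with $\bch P$ equal to the Frobenius character of $A$. Tensor-Hom adjunction for the group algebra $K[\fS_r]$ then gives the natural isomorphism
\begin{equation*}
\Hom_G\bigl(A\otimes_{K[\fS_r]} (\otimes^r V),\ W\bigr) \;\cong\; \Hom_{\fS_r}\bigl(A,\ \Hom_G(\otimes^r V, W)\bigr),
\end{equation*}
valid because the $\fS_r$-action on $\otimes^r V$ commutes with the $G$-action and hence induces an $\fS_r$-action on $\Hom_G(\otimes^r V, W)$.

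Finally I would take dimensions and apply the standard fact that for finite dimensional $\fS_r$-modules $A$ and $M$,
\begin{equation*}
\dim \Hom_{\fS_r}(A,M) \;=\; \bigl\langle \bch A,\ \bch M\bigr\rangle,
\end{equation*}
i.e.\ the Hall inner product computes multiplicities via Frobenius characters. Substituting $M = \Hom_G(\otimes^r V, W)$ and using $\bch A = \bch P$ produces the homogeneous form of the claimed formula, and re-assembling over all $r$ (with $\bch \Hom_{\fS}(T^\bullet(V),W)$ interpreted as the graded sum $\sum_r \bch_r \Hom_G(\otimes^r V, W)$) yields the general statement. There is essentially no obstacle here beyond bookkeeping: the only point requiring a moment of care is verifying that the $\fS_r$-action on $\Hom_G(\otimes^r V,W)$ is indeed the one whose Frobenius character matches the right-hand side, which is immediate once one checks that the adjunction isomorphism above is $\fS_r$-equivariant on the trivial $\fS_r$-side.
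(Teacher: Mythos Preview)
Your proposal is correct and follows essentially the same approach as the paper: reduce to the homogeneous case, identify $P(V)$ with $A\otimes_{K[\fS_r]}(\otimes^r V)$, apply tensor--Hom adjunction to obtain $\Hom_G(P(V),W)\cong\Hom_{\fS_r}(A,\Hom_G(\otimes^r V,W))$, and take dimensions. The paper's proof is terser---it simply says ``Taking dimensions gives the proposition''---whereas you spell out the step $\dim\Hom_{\fS_r}(A,M)=\langle\bch A,\bch M\rangle$, but this is bookkeeping rather than a different argument.
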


\begin{proof} Assume, without loss of generality, that $P$ is homogeneous of degree $r$
and that $P$ corresponds to the representation $A$. Then, by the $\otimes$-$\Hom$
adjunction applied to the bimodule $\otimes^rV$, there is a natural isomorphism of vector spaces
\begin{equation*}
 \Hom_G( A\otimes_{\fS_r}(\otimes^r V),W) \cong \Hom_{\fS_r}(A, \Hom_G(\otimes^r V, W) )
\end{equation*}
Taking dimensions gives the proposition.
\end{proof}

\begin{thm}\label{main} Let $G$ be a reductive algebraic group, $V,W\in\sR(G)$,
and $P$ a polynomial functor. Then
\begin{equation*}
 \bch \Hom_G(T^\bullet(P(V)),W) =
\left\langle H[X.(\bch P)[Y]],\bch \Hom_G(T^\bullet(V),W)[Y] \right\rangle_Y
\end{equation*}
\end{thm}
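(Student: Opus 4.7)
The plan is to show that the two sides of the claimed identity agree as symmetric functions in the alphabet $X$ by checking that their Hall scalar products against every Schur function $s_\mu[X]$ coincide. Since the $\{s_\mu\}$ form an orthonormal basis, this reduces the theorem to an identity of dimensions, and each such identity should be extracted by a single application of Proposition \ref{prop:adj}.

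For the left-hand side this is immediate: Proposition \ref{prop:adj} applied to the Schur functor $\bS^\mu$ and the representation $P(V)$ gives
\begin{equation*}
\big\langle s_\mu[X],\, \bch \Hom_G(T^\bullet(P(V)), W)[X]\big\rangle_X = \dim \Hom_G(\bS^\mu(P(V)), W).
\end{equation*}
The work lies on the right-hand side, where I would use the Cauchy identity in plethystic form, $H[XY] = \sum_\lambda s_\lambda[X]\, s_\lambda[Y]$. Substituting $Y \mapsto (\bch P)[Y]$ and invoking associativity of plethysm, one obtains
\begin{equation*}
H\big[X \cdot (\bch P)[Y]\big] = \sum_\lambda s_\lambda[X]\cdot (s_\lambda[\bch P])[Y].
\end{equation*}
Pairing with $s_\mu[X]$ in the $X$-variables then collapses the sum to the single term $(s_\mu[\bch P])[Y]$, and the right-hand side of the theorem reduces to
\begin{equation*}
\big\langle (s_\mu[\bch P])[Y],\, \bch \Hom_G(T^\bullet(V), W)[Y]\big\rangle_Y.
\end{equation*}
To finish, I would use that the character of a composition of polynomial functors is the plethysm of characters, so that $s_\mu[\bch P] = \bch(\bS^\mu \circ P)$, and apply Proposition \ref{prop:adj} a second time, now to the polynomial functor $\bS^\mu \circ P$ and the representation $V$. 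This identifies the reduced right-hand side with $\dim \Hom_G(\bS^\mu(P(V)), W)$, matching the left-hand side.

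The central calculation is the Cauchy-type expansion of $H[X\cdot(\bch P)[Y]]$, so the main obstacle will be verifying the plethystic bookkeeping carefully: that the Cauchy identity survives the substitution of $(\bch P)[Y]$ into the $Y$-alphabet, that associativity of plethysm legitimately produces $(s_\lambda[\bch P])[Y]$ as the coefficient, and that $\bch(\bS^\mu \circ P) = s_\mu[\bch P]$. Once those plethystic identities are in hand, the rest is a clean two-step application of Proposition \ref{prop:adj} together with orthonormality of Schur functions.
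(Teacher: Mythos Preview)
Your proposal is correct and follows essentially the same route as the paper: expand $H[X\cdot(\bch P)[Y]]$ via the Cauchy identity, then match the coefficient of $s_\mu$ on each side using Proposition~\ref{prop:adj} for the composite functor $\bS^\mu\circ P$ together with $\bch(\bS^\mu\circ P)=s_\mu[\bch P]$. The only cosmetic difference is that the paper writes out both sides directly as Schur expansions (so the left-hand side is unpacked from the definition of $\bch$ rather than by a second call to Proposition~\ref{prop:adj}), whereas you phrase the comparison as pairing against each $s_\mu$; since the Schur functions are orthonormal, these are the same argument.
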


\begin{proof} The proof consists of expanding both sides to get the same expression
in both cases.

The left hand side expands to
\begin{equation*}
 \sum_\lambda \dim \Hom_G ( \bS^\lambda(P(V)),W) . s_\lambda
\end{equation*}

The Cauchy identity is
\begin{equation*}
 H[X.Y] = \sum_\lambda s_\lambda[X].s_\lambda[Y]
\end{equation*}
and so
\begin{equation*}
  H[X.(\bch P)[Y]] = \sum_\lambda s_\lambda[X].s_\lambda[\bch P[Y]]
\end{equation*}
Hence the right hand side expands to
\begin{equation*}
 \sum_\lambda \left\langle s_\lambda[\bch P], \bch \Hom_G(T^\bullet(V),W) \right\rangle . s_\lambda
\end{equation*}
The coefficients of $s_\lambda$ in these two equations are equal by Proposition \ref{prop:adj}
applied to the polynomial functor $\bS^\lambda\circ P$.
\end{proof}

Theorem \ref{thm:inv} follows by taking $W$ to be the trivial representation.
%\printbibliography
\bibliography{inv}{}
\bibliographystyle{alpha}
\end{document}